\theoremstyle{plain}
\numberwithin{equation}{section}
\newtheorem{thm}{Theorem}[section]
\newtheorem{theorem}[thm]{Theorem}
\begin{document}

\setcounter{page}{1}

\title[ An Identity Motivated by an Amazing Identity of Ramanujan ]
       {   An Identity Motivated by an Amazing Identity of Ramanujan}
\author{James Mc Laughlin}
\address{Mathematics Department\\
 25 University Avenue\\
West Chester University, West Chester, PA 19383}
\email{jmclaughl@wcupa.edu}

 \keywords{Prouhet-Tarry-Escott, Ramanujan}

\date{\today}

\begin{abstract}
Ramanujan stated an  identity to the effect that if three sequences
$\{a_n\}$, $\{b_n\}$ and $\{c_n\}$ are defined by
$r_1(x)=:\sum_{n=0}^{\infty}a_nx^n$,
$r_2(x)=:\sum_{n=0}^{\infty}b_nx^n$ and
$r_3(x)=:\sum_{n=0}^{\infty}c_nx^n$ (here each $r_i(x)$ is a certain
rational function in $x$), then
\[
a_n^3+b_n^3-c_n^3=(-1)^n, \hspace{25pt} \forall \,n \geq 0.
\]

Motivated by this amazing identity, we state and prove a more
general identity involving eleven sequences, the new identity being
"more general" in the sense that equality holds not just for the
power 3 (as in Ramanujan's identity), but for each power $j$, $1\leq
j \leq 5$.
\end{abstract}

\maketitle

\section{Introduction}

In the ``lost notebook" \cite[page 341]{R88}, Ramanujan records the
following remarkable identity. If the sequences $\{a_n\}$, $\{b_n\}$
and $\{c_n\}$ are defined by
\begin{align}\label{rameq1}
\frac{1+53x+9x^2}{1-82x-82x^2+x^3}&=:\sum_{n=0}^{\infty}a_nx^n,\\
\frac{2-26x-12x^2}{1-82x-82x^2+x^3}&=:\sum_{n=0}^{\infty}b_nx^n,\notag\\
\frac{2+8x-10x^2}{1-82x-82x^2+x^3}&=:\sum_{n=0}^{\infty}c_nx^n,\notag
\end{align}
then
\begin{equation}\label{rameq2}
a_n^3+b_n^3-c_n^3=(-1)^n, \hspace{25pt} \forall \,n \geq 0.
\end{equation}

As Hirschhorn remarks in \cite{H95}, what is amazing about this
identity is not only that it is true, but that anyone could come up
with it in the first place. As well as giving a proof of the
identity, Hirschhorn  also gives a plausible explanation of how
Ramanujan might have discovered it. A second proof of the identity
was given by Hirschhorn in \cite{H96}, and a third proof was given
by Hirschhorn and Han in \cite{HH06}, where the authors also prove
that the sequences $\{a_n\}$, $\{b_n\}$ and $\{c_n\}$ may also be
derived from a certain matrix equation.

Motivated by this amazing identity of Ramanujan, and Hirschhorn
explanation of how Ramanujan might have found it, we present a more
general identity in the present paper, one where the three sequences
in \eqref{rameq2} are replaced by eleven sequences, and the
identity holds not just for a single exponent (3 in the case of
\eqref{rameq2}), but for all integer exponents $j$, $1 \leq j \leq
5$.

\section{A Ramanujan-type Identity  }
The identity referred to is described in the following theorem.

\begin{theorem}\label{t2}
Let the sequences of  integers $a_k$, $b_k$, $c_k$, $d_k$, $e_k$,
$f_k$,  $p_k$, $q_k$, $r_k$, $s_k$ and $t_k$   be defined by {\allowdisplaybreaks
\begin{align}\label{gfs2eq}
&\frac{x^2+164 x+3}{x^3-99 x^2+99 x-1}=:\sum_{k=0}^{\infty}a_k x^k,&&
\frac{-5 x^2+138 x+3}{x^3-99 x^2+99 x-1}=:\sum_{k=0}^{\infty}p_k x^k,&\notag \\
& \frac{-7 x^2+134 x+1}{x^3-99 x^2+99 x-1}
 =:\sum_{k=0}^{\infty}b_k x^k,&&\frac{3 x^2+244 x+1}{x^3-99 x^2+99 x-1}=:\sum_{k=0}^{\infty}q_k x^k,&\notag  \\
&\frac{-x^2+298 x-1}{x^3-99 x^2+99 x-1}=:\sum_{k=0}^{\infty}c_k x^k,&&\frac{x^2+254 x-7}{x^3-99 x^2+99 x-1}=:\sum_{k=0}^{\infty}r_k x^k,&\notag  \\
 &\frac{-5 x^2+228 x-7}{x^3-99 x^2+99 x-1}=:\sum_{k=0}^{\infty}d_k x^k,&&\frac{-7 x^2+148 x-5}{x^3-99 x^2+99 x-1}=:\sum_{k=0}^{\infty}s_k x^k,&\notag  \\
 &\frac{3 x^2+258 x-5}{x^3-99 x^2+99 x-1}=:\sum_{k=0}^{\infty}e_k x^k,&&\frac{3}{1-x}=:\sum_{k=0}^{\infty}t_k x^k,&\notag  \\
& \frac{-3 x^2+94 x-3}{x^3-99 x^2+99 x-1}=:\sum_{k=0}^{\infty}f_k x^k.&&&\notag
\end{align}
} Then for $1 \leq j \leq 5$, and each $k\geq 0$,
\begin{equation}\label{id2eq}
a_k^j+b_k^j+c_k^j+d_k^j+e_k^j+f_k^j
-p_k^j-q_k^j-r_k^j-s_k^j-t_k^j=1.
\end{equation}
\end{theorem}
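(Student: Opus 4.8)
The plan is to reduce the identity --- asserted for all $k\ge 0$ --- to a finite numerical check by exploiting a linear recurrence common to all eleven sequences. First I would factor the denominator: since $x=1$ is a root,
\[
x^3-99x^2+99x-1=(x-1)(x^2-98x+1),
\]
the characteristic roots are $1$ together with $\alpha,\beta=49\pm20\sqrt6$, which are distinct and positive and satisfy $\alpha\beta=1$ and $\alpha+\beta=98$. Because each numerator has degree at most $2$, partial fractions give every one of the sequences $a_k,\dots,s_k$ a closed form $u+v\alpha^k+w\beta^k$, valid for all $k\ge0$, with real constants depending on the sequence; the sequence $t_k\equiv3$ is the degenerate case $v=w=0$. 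Since $\beta=\alpha^{-1}$, each sequence is the value at $y=\alpha^k$ of a Laurent polynomial $u+vy+wy^{-1}$, so for a fixed exponent $j$ the $j$th power of any sequence is a linear combination of the geometric sequences $\alpha^{\ell k}$ with $-j\le\ell\le j$.

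Next I would use a dimension count. Fix $j$ with $1\le j\le5$ and set
\[
\Sigma_j(k)=a_k^j+b_k^j+c_k^j+d_k^j+e_k^j+f_k^j-p_k^j-q_k^j-r_k^j-s_k^j-t_k^j .
\]
By the previous step $\Sigma_j$ lies in the real vector space $V=\operatorname{span}_{\R}\{\alpha^{\ell k}:-5\le\ell\le5\}$. As $\alpha>1$, the eleven ratios $\alpha^{-5},\dots,\alpha^{5}$ are distinct, so these eleven sequences are linearly independent and $\dim V=11$; moreover the constant sequence $1=\alpha^{0\cdot k}$ lies in $V$. Evaluating a member of $V$ at $k=0,1,\dots,10$ gives a Vandermonde system in the distinct nodes $\alpha^{\ell}$, which is invertible, so every element of $V$ is determined by its first eleven values. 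Hence to prove $\Sigma_j\equiv1$ it suffices to check $\Sigma_j(k)=1$ for the eleven indices $k=0,1,\dots,10$.

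Finally I would carry out this finite check. Every one of the eleven sequences satisfies $c_k=99c_{k-1}-99c_{k-2}+c_{k-3}$ for $k\ge3$, with initial values $c_0,c_1,c_2$ read off from the generating functions (and $t_k\equiv3$), so the first eleven terms of all eleven sequences are produced by integer arithmetic, and the $55$ instances $\Sigma_j(k)=1$ with $1\le j\le5$ and $0\le k\le10$ are then confirmed directly.

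The substance of the result, and the main obstacle, is conceptual rather than computational: the particular numerators are engineered so that, once the closed forms are substituted, all non-constant Laurent terms $\alpha^{\ell k}$ with $\ell\ne0$ cancel between the ``plus'' sequences $a,\dots,f$ and the ``minus'' sequences $p,\dots,t$ simultaneously for every $j\le5$, which is exactly a Prouhet--Tarry--Escott phenomenon for the eleven triples $(u,v,w)$. An equivalent route would expand $\sum_{\mathrm{plus}}(u+vy+wy^{-1})^j-\sum_{\mathrm{minus}}(u+vy+wy^{-1})^j$ as a Laurent polynomial in $y$ and verify that it collapses to the constant $1$; this is cleaner in principle but requires the explicit partial-fraction constants, which involve $\sqrt6$, whereas the recurrence-based verification above stays entirely within the integers.
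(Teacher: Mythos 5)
Your proposal is correct, but it takes a genuinely different route from the paper. You treat the generating functions themselves as the starting point: partial fractions over the distinct roots $1,\,49\pm20\sqrt6$ of $x^3-99x^2+99x-1=(x-1)(x^2-98x+1)$ give each sequence the form $u+v\alpha^k+w\alpha^{-k}$, so each $j$-th power (and hence $\Sigma_j$) lies in the $11$-dimensional space spanned by $\alpha^{\ell k}$, $-5\le\ell\le 5$, and the Vandermonde argument correctly reduces the theorem to checking $\Sigma_j(k)=1$ for $0\le k\le 10$, $1\le j\le 5$, which the common recurrence $c_k=99c_{k-1}-99c_{k-2}+c_{k-3}$ makes a purely integer computation. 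The paper instead works in the opposite direction: it starts from Chernick's parametric ideal solution of size $6$ to the Prouhet--Tarry--Escott problem, substitutes $(m_k,n_k)=(h_{k+1},h_k)$ where $h_k=10h_{k-1}-h_{k-2}$, $h_0=0$, $h_1=1$, exploits the recurrence invariant $h_{k+1}^2-10h_{k+1}h_k+h_k^2=1$ to force one term on the minus side to equal $1$, and then shifts all terms by $2$; the identity \eqref{id2eq} thus holds automatically for every $k$, and the only computation is verifying that the resulting sequences have the stated generating functions (via the closed forms $H_1,H_2,H_3$ and geometric series). The trade-off is clear: the paper's proof is structural, explains where the eleven numerators come from, and involves only small symbolic computations, but leans on the external parametric solution \eqref{ab87eq}; your proof is self-contained, requires no knowledge of Chernick's identity, and is really a decision procedure that would settle any claimed identity of this shape --- but it buys this generality at the cost of an unexecuted $55$-case verification whose entries grow to roughly $10^{99}$ (feasible only by machine), and it yields no insight into why the cancellation happens, a point your own closing paragraph concedes by re-deriving the Prouhet--Tarry--Escott structure that the paper puts front and center. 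To be a complete proof, the finite check must actually be carried out; as written it is a correct reduction plus a promissory note.
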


We note that \eqref{id2eq} differs from Ramanujan's identity
\eqref{rameq2}, in that \eqref{id2eq} is true for each integer
exponent $j$, $1 \leq j \leq 5$, in contrast to \eqref{rameq2},
which is true only for the fixed exponent 3. For example, one can
check that
\begin{multline*}
\{ a_1,b_1,c_1,d_1,e_1,f_1,p_1,q_1,r_1,s_1,t_1\}\\
=\{-461,-233,-199,465,237,203,-435,-343,439,347,3 \}
\end{multline*}
and that
\begin{multline}\label{pteexeq}
(-461)^j+(-233)^j+(-199)^j+465^j+237^j+203^j\\
-(-435)^j-(-343)^j-439^j-347^j-3^j=1,
\end{multline}
for $1 \leq j \leq 5$. Like Ramanujan's sequences, the terms in our
sequences also grow arbitrarily large (except for $t_k$ which has
the constant value $3$ for all $k \geq 0$), while the left side of
\eqref{id2eq} maintains the constant value 1.

Many readers will no doubt have recognized that what has been
encoded in the various generating functions is a sequence of ideal
solutions of size 6 to what has become known as the \emph{Prouhet-Tarry-Escott} problem (Dickson \cite{D66} referred to it
as the problem of ``equal sums of like powers").
Before coming to the proof of Theorem \ref{t2}, we briefly discuss
this problem.

The \emph{Prouhet-Tarry-Escott} problem, which has a history going
back to Goldbach, asks for two distinct
 multisets of integers $A = \{a_1, . . . , a_m\}$
and $B = \{b_1, . . . , b_m\}$ such that
\begin{equation}\label{pteeq}
\sum_{i=1}^{m}a_i^e=\sum_{i=1}^{m}b_i^e, \text{ for } e = 1, 2,
\dots , k,
\end{equation}
for some integer $k < m$. We call $m$ the \emph{size} of the
solution and $k$ the \emph{degree}. If $k = m-1$, such a solution is
called \emph{ideal}. For example, it is easy to check that
\[
 1^j + 21^j + 36^j + 56 ^j =2^j + 18^j + 39^j + 55^j
\]
holds for $j=1,2$ and $3$. Thus $A=\{1, 21, 36, 56 \}$, $B = \{ 2,
18, 39, 55 \}$ provide an ideal solution of size 4.

We write
\begin{equation}\label{pteeq2}
\{a_1, . . . , a_m\}\stackrel{k}{=}\{b_1, . . . , b_m\}
\end{equation}
to denote a solution of size $m$ and degree $k$ to the
Prouhet-Tarry-Escott problem. As regards parametric solutions, an
early example was given by Euler (see \cite[page 705]{D66}), who showed that
\[
\{a,b,c,a+b+c\}\stackrel{2}{=}\{a+b,a+c,b+c,0\}.
\]

Parametric ideal solutions are known for $m=1,\dots , 8$ and
particular numerical solutions are known for $m=9, 10$ and 12. The
interested reader may find some of the early history of this interesting problem in Chapter XXIV of \cite{D66},  and some of the more recent developments
at \cite{BI94} and \cite{S07}.

The following parametric solution of size 6 is due to Chernick
\cite{C37}. For any integers $m_k$ and $n_k$, if
{\allowdisplaybreaks
\begin{align}\label{ab87eq}
&a_k'=-5 m_k^2+4 m_k n_k-3 n_k^2,&&p_k'=-5 m_k^2+6 m_k n_k+3 n_k^2,&\\
&b_k'=-3m_k^2+6 m_k n_k+5 n_k^2,&&q_k'=-3m_k^2-4 m_k n_k-5n_k^2,&\notag\\
&c_k'=- m_k^2-10 m_k n_k- n_k^2,&&r_k'=- m_k^2+10 m_k n_k- n_k^2,&\notag\\
&d_k'=5 m_k^2-4 m_k n_k+3 n_k^2,&&s_k'=5 m_k^2-6 m_k n_k-3 n_k^2,&\notag\\
&e_k'=3 m_k^2-6 m_k n_k-5 n_k^2,&&t_k'=3 m_k^2+4 m_k n_k+5 n_k^2,&\notag\\
&f_k'=m_k^2+10 m_k n_k+ n_k^2,&&u_k'= m_k^2-10 m_k n_k+ n_k^2.&\notag
\end{align}
} then
\begin{equation}\label{par8eq}
\{a',b',c',d',e',f'\}\stackrel{5}{=}\{p',q',r',s',t',u'\}.
\end{equation}

The observant reader will have noticed that the twelve terms
actually form 6 pairs, each of the three pairs on each side of
\eqref{par8eq} consisting of a term and its negative ($d_k'=-a_k'$
and so on), so that \eqref{par8eq} is trivially true for odd powers.
To make our generating functions and sequences at least superficially more interesting, we will modify these sequences  using the easily-proved fact that
if \[ \{a_1, . . . , a_m\}\stackrel{k}{=}\{b_1, . . . , b_m\},\]
then
\[ \{Ma_1+K, . . . , Ma_m+K\}\stackrel{k}{=}\{Mb_1+K, . . . ,
Mb_m+K\},\] for constants $M$ and $K$.

In the present case, we will determine particular sequences
$\{m_k\}_{k=0}^{\infty}$ and $\{n_k\}_{k=0}^{\infty}$, with the
sequences $a_k' \dots u_k'$ being defined by \eqref{ab87eq}. We then
set $a_k=a_k'+2u_k'$, $b_k=b_k'+2u_k'$ and so on. In particular,
$r_k=r_k'+2u_k'=-u_k'+2u_k'=u_k'$. We will further show that
$u_k'=r_k=1$, so that
\[
\{a_k,b_k,c_k,d_k,e_k,f_k\}\stackrel{5}{=}
\{p_k,q_k,1,s_k,t_k,u_k\}
\]
will hold automatically for each integer $k\geq 0$, which gives
\eqref{id2eq}, after a slight manipulation.

All that will remain will be to show that each of the generating
functions has the stated form. We now proceed to the proof.

\begin{proof}[Proof of Theorem \ref{t2}]
Set $h_0=0$, $h_1=1$, and for $k>1$, set
\begin{equation}\label{h2eq}
h_k=10 h_{k-1}- h_{k-2}.
\end{equation}
 Upon solving
the characteristic equation $x^2-10x+1=0$ and applying the stated
initial conditions, we find that
\begin{align*}
h_k&=
  -\frac{\left(5-2 \sqrt{6}\right)^k}{4 \sqrt{6}}+\frac{\left(5+2
   \sqrt{6}\right)^k}{4 \sqrt{6}},\\
h_k^2&=\frac{-2+\left(49-20 \sqrt{6}\right)^k+\left(49+20
   \sqrt{6}\right)^k}{96},\\
h_{k+1}h_k&=\frac{-10+\left(5-2
   \sqrt{6}\right)\left(49-20 \sqrt{6}\right)^k +\left(5+2 \sqrt{6}\right) \left(49+20
   \sqrt{6}\right)^k}{96}.
\end{align*}

In  \eqref{ab87eq}, we set $m_k=h_{k+1}$ and $n_k=h_k$, noting that
\eqref{h2eq} implies that
\begin{equation*}
h_{k+1}^2-10h_{k+1}h_k+h_k^2=h_{k}^2-10h_{k}h_{k-1}+h_{k-1}^2=\dots
=h_1^2-10h_1h_0+h_0^2=1,
\end{equation*}
so that $r_k=u_k'=1$. Thus all that remains is to show that,
with these choices for $m_k$ and $n_k$, that the various generating
functions have the stated forms. We do this for $\sum_{k=0}^{\infty}a_k x^k$ only, since
the proofs for the other generating functions are virtually
identical.

Define {\allowdisplaybreaks
\begin{align*}
H_1(x)&:= \sum_{k=0}^{\infty}h_k^2x^k =\sum_{k=0}^{\infty}\frac{-2+\left(49-20 \sqrt{6}\right)^k+\left(49+20
   \sqrt{6}\right)^k}{96}x^k\\
    &=\frac{1}{96}
    \left(\frac{-2}{1-x}
    +\frac{1}{1-  \left(49-20 \sqrt{6}\right)x}
    +\frac{1}{1-  \left(49+20
   \sqrt{6}\right)x}\right)\\
    &=\frac{-x (x+1)}{x^3-99 x^2+99 x-1},\\
H_2(x)&:= \sum_{k=0}^{\infty}h_{k+1} h_k x^k\\
    &=\frac{1}{96}
    \left(\frac{-10}{1-x}
    +\frac{5-2
   \sqrt{6}}{1-  \left(49-20 \sqrt{6}\right)x}
    +\frac{5+2 \sqrt{6}}{1-  \left(49+20
   \sqrt{6}\right)x}\right)\\
    &=\frac{-10 x}{x^3-99 x^2+99 x-1},\\
H_3(x)&:= \sum_{k=0}^{\infty}h_{k+1}^2x^k\\
&=\frac{H_1(x)}{x}=\frac{-x-1}{x^3-99 x^2+99 x-1}.
\end{align*}
}

These formulae for $H_1(x)$, $H_2(x)$ and $H_3(x)$ follow after
using the summation formula for an infinite geometric series a
number of times, and then using a little algebra to combine the
resulting rational expressions.

Next,
\[
a_k=a_k'+2u_k'=-5h_{k+1}^2+4h_{k+1}h_k-3 h_k^2+2,
\]
so that
\begin{align*}
\sum_{k=0}^{\infty}a_kx^k
&=\sum_{k=0}^{\infty}(-5h_{k+1}^2+4h_{k+1}h_k-3 h_k^2+2)x^k\\
&=
-5H_3(x)+4H_2(x)-3H_1(x)+\frac{2}{1-x}\\
&=\frac{x^2+164 x+3}{x^3-99 x^2+99 x-1},
\end{align*}
as claimed in Theorem \ref{t2}. The claimed formulae for the other
generating functions follow similarly, giving the result.
\end{proof}

 \allowdisplaybreaks{

}

\medskip

\noindent AMS Classification Numbers: 11A55

\end{document}